\documentclass{article}

\usepackage{amsmath} 

\usepackage{amssymb} 

\usepackage{amsthm} 

\usepackage{tikz} 

\usepackage{relsize} 

\usepackage{graphicx} 

\usepackage[pdfborder={0 0 0}]{hyperref} 

\usepackage[flushleft]{threeparttable}

\usepackage{thmtools}

\usepackage{placeins}

\usepackage{thm-restate}

\usepackage{tikz-cd} 

\usepackage{enumerate}

\usepackage{float}

\usepackage{scalerel}

\declaretheorem[name=Theorem,numberwithin=section]{thm}

\usepackage{xcolor,colortbl}

\usepackage{hyperref}

\newtheorem{theorem}[thm]{Theorem}

\newtheorem{example}[thm]{Example}

\theoremstyle{remark}

\theoremstyle{definition}

\begin{document}
\title{\textbf{The Number of Solutions to $ax+by+cz=n$ for Fibonacci and Lucas triplets}}
\author{\textbf{Pooja Teotia}}
\date{}

\maketitle
\begin{center}
    Department of Mathematics, Sant Longowal Institute Of Engineering And Technology - 148106 (Punjab) India \\
    poojateotia1117@gmail.com
\end{center}
\begin{abstract}
In this work we develop exact formulas to the number of solutions of $ax+by+cz=n$ in some special cases. In 2020, Binner gave a formula for the  number of non-negative integer solutions, $N(a,b,c;n)$ in non-negative integer pairs $(x,y,z),$ of the equation $ax+by+cz=n$ assuming that $a,b,c$ and $n$ are natural numbers. However, his formula was in summations of floor functions. Moreover,  he gave a reciprocity relation to solve these sums by generalizing the Gauss reciprocity relation; see \cite{Binner} for more details. Until now no exact formula has been found to solve these sums. We notice that these sums can be completely solved in some special cases, which lead us to find the number of solutions of the above equation in case of Fibonacci and Lucas triplets. In other words; If $a,b\ and\ c$ are chosen to be three consecutive Fibonacci or Lucas numbers then we determine the exact formula to the number of non-negative integer solutions $(x,y,z)$ of the equation \textbf{$F_ix+F_{i+1}y+F_{i+2}z=n$} and \textbf{$L_ix+L_{i+1}y+L_{i+2}z=n$} where i is fixed.
\end{abstract}

\section{Introduction}
Suppose we are given natural numbers $a$ and $b$ with $\gcd(a,b)=1.$ It is well-known that the equation $ax+by=n$ has solutions if and only if $\gcd(a,b)$ divides $n$. If $\gcd(a,b)$ does not divide $n$, then the above equation has no solutions. Therefore, there is no loss of generality in assuming that $\gcd(a,b)=1$.
Throughout, we will be concerned with nonnegative integer solutions only.
In $2000$, Tripathi \cite{AmiTri} discovered a formula to find the number of solutions of the equation $ax+by=n$. He used the method of generating functions, roots of unity, partial fractions and Taylor's series to obtain his formula.
Tripathi's result \cite{AmiTri} is given below.

\begin{theorem}[Tripathi ($2000$)]
The number of non-negative integer solutions of $ax+by=n$ is
$$ N(a,b;n) = \frac{n+aa'(n)+bb'(n)}{ab}-1.$$ 
where the quantities $a'(n)$ and $b'(n)$ are determined uniquely by
\begin{itemize}

\item $a'(n) \equiv -na^{-1}$ (mod $b$), $1 \leq a'(n) \leq b$.

\item $b'(n) \equiv -nb^{-1}$ (mod $a$), $1 \leq b'(n) \leq a$.

\end{itemize}
\end{theorem}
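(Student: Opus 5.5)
A direct counting argument avoids generating functions. We assume $\gcd(a,b)=1$ throughout, as in the paper.

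\emph{Step 1: the extremal solution $(a'(n),\cdot)$.} Since $aa'(n)\equiv -n \pmod b$, the integer $n+aa'(n)$ is divisible by $b$. So
$$y_0:=\frac{n+aa'(n)}{b}$$
is an integer, and $a(-a'(n))+by_0=n$.

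\emph{Step 2: parametrize all integer solutions.} Every integer solution has the form $x=-a'(n)+bt$, $y=y_0-at$ with $t\in\mathbb Z$. The condition $x\ge 0$ means $bt\ge a'(n)$. Because $1\le a'(n)\le b$, this is equivalent to $t\ge 1$. The condition $y\ge 0$ means $t\le y_0/a$. Hence
$$N(a,b;n)=\left\lfloor \frac{y_0}{a}\right\rfloor=\left\lfloor\frac{n+aa'(n)}{ab}\right\rfloor.$$
This is the count when $y_0/a\ge 0$, which holds since $n\ge 0$.

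\emph{Step 3: evaluate the floor using $b'(n)$.} We want to show
$$\frac{n+aa'(n)}{ab}-\left\lfloor\frac{n+aa'(n)}{ab}\right\rfloor=\frac{a-bb'(n)}{ab}+\text{(integer adjustment)}.$$
In other words, we want
$$\left\lfloor\frac{y_0}{a}\right\rfloor=\frac{y_0-r}{a},$$
where $r$ is the residue of $y_0$ modulo $a$ with $0\le r\le a-1$.

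Compute $by_0=n+aa'(n)\equiv n\pmod a$, so $y_0\equiv nb^{-1}\equiv -b'(n)\pmod a$. Since $1\le b'(n)\le a$, the representative $a-b'(n)$ lies in $[0,a-1]$ and is $\equiv -b'(n)$ modulo $a$. Therefore $r=a-b'(n)$.

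Then
$$N=\frac{y_0-a+b'(n)}{a}=\frac{n+aa'(n)}{ab}+\frac{b'(n)}{a}-1=\frac{n+aa'(n)+bb'(n)}{ab}-1.$$

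\emph{Main obstacle.} The delicate point is the range conventions: $1\le a'(n)\le b$ and $1\le b'(n)\le a$ rather than starting at $0$. These are exactly what make $x\ge0\iff t\ge1$ in Step 2, and what make $r=a-b'(n)$ land in $[0,a-1]$ in Step 3. I would check the boundary cases $a'(n)=b$ (i.e.\ $b\mid n$) and $b'(n)=a$ (i.e.\ $a\mid n$) explicitly.
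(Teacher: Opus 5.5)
Your proof is correct, but it does not follow the route the paper points to. The paper gives no proof of this statement. It only cites Tripathi, remarking that he used generating functions, roots of unity, partial fractions and Taylor series. That is, one expands $1/((1-q^a)(1-q^b))$ in partial fractions over the $a$-th and $b$-th roots of unity, then evaluates the resulting root-of-unity sums in terms of the residues $a'(n)$ and $b'(n)$.

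You instead count lattice points directly. The particular solution $(-a'(n),y_0)$, together with the normalization $1\le a'(n)\le b$, turns $x\ge 0$ into exactly $t\ge 1$, so $N=\lfloor y_0/a\rfloor$. Then $by_0\equiv n \pmod a$ identifies the least non-negative residue of $y_0$ modulo $a$ as $a-b'(n)$. This is essentially the classical elementary proof of Popoviciu's formula $N(a,b;n)=\frac{n}{ab}-\{\bar{b}n/a\}-\{\bar{a}n/b\}+1$, with $\bar{a}a\equiv 1 \pmod b$, $\bar{b}b\equiv 1 \pmod a$, and $\{\cdot\}$ the fractional part. Tripathi's formula is a restatement of it, since $a'(n)/b=1-\{\bar{a}n/b\}$ and $b'(n)/a=1-\{\bar{b}n/a\}$.

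Your argument is shorter and self-contained, and it shows exactly why the residues are normalized to $[1,b]$ and $[1,a]$ rather than starting at $0$. The generating-function method is heavier, but it extends mechanically to any number of coefficients, where partial fractions over roots of unity produce Fourier--Dedekind-type sums. The direct count extends most naturally by summing $N(a,b;n-cz)$ over $z$.

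Two minor points. First, the heuristic display opening your Step 3 is wrong as written. The fractional part of $y_0/a$ is $(a-b'(n))/a=(ab-bb'(n))/(ab)$, which differs from $(a-bb'(n))/(ab)$ by $(b-1)/b$, not by an integer. Nothing later depends on it, so just delete it. Second, the boundary cases $a'(n)=b$ and $b'(n)=a$ need no separate check. They are already covered by $a'(n)/b\in(0,1]$ and $a-b'(n)\in[0,a-1]$.
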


In 2020, Binner \cite{Binner} extended Tripathi's work and discovered a formula for the number of solutions $N(a,b,c;n)$ of the equation $ax+by+cz=n$ in non-negative integer triples $(x,y,z)$, though his result contains summations. 
\\
In this work, our task is to use Binner's \cite{Binner} result to find exact formulas for the number of solutions in the cases when the coefficients are consecutive Fibonacci or Lucas triplets. Let $F_i$ denote the $i^{th}$ Fibonacci number and $L_i$ denote the $i^{th}$ Lucas number. For given fixed natural numbers $i$ and $n$, we find an exact formula to the number of non-negative integer solutions, $N(F_i,F_{i+1},F_{i+2};n)$ and $N(L_i,L_{i+1},L_{i+2};n)$ of the equations $$F_ix+F_{i+1}y+F_{i+2}z=n$$ and $$L_ix+L_{i+1}y+L_{i+2}z=n$$ in Theorem \ref{FibFrob} and Theorem \ref{FibFrobb} respectively.
We will prove Theorem \ref{FibFrob} in Section \ref{section2} and Theorem \ref{FibFrobb} in Section \ref{Section3}. 

Next, we describe Binner's result for $N(a,b,c;n)$ in some detail.
Suppose that $a,$ $b,$ $c$ and $n$ are given natural numbers such that $\gcd(a,b,c)=1$. 
Binner defined a transformation that helped him to reduce the equation $ax+by+cz=n$ with pairwise coprime coefficients, i.e. $\gcd(a,b) = \gcd(b,c) = \gcd(a,c) = 1$. We refer the reader to \cite[Section 2.1]{Binner} to know more about this transformation method. He also introduced some notation as described below.
\begin{itemize}
	\item Define $ b'_1$ such that  $b'_1 \equiv - nb^{-1}$ (mod $a$)  with $ 1\leq b'_1 \leq a$. Moreover define $ c'_1$ such that $c'_1 \equiv bc^{-1}$ (mod $a$)  with $ 1\leq c'_1 \leq a$.
	\item Define  $ c'_2$ such that  $c'_2 \equiv - nc^{-1}$ (mod $b$)  with $ 1\leq c'_2 \leq b$. Moreover define $ a'_2$ such that $a'_2 \equiv ca^{-1}$ (mod $b$)  with $ 1\leq a'_2\leq b$.
	\item Define $ a'_3$ such that $a'_3 \equiv - na^{-1}$ (mod $c$)  with $ 1\leq a'_3 \leq c$. Moreover define  $b'_3$ such that $b'_3 \equiv ab^{-1}$ (mod $c$)  with $ 1\leq b'_3 \leq c$.
	\item $N_1 = n(n + a + b +c) + cbb'_1(a+1-c'_1(b'_1-1)) + acc'_2(b+1 - a'_2(c'_2-1))$ $+ baa'_3 (c+1-b'_3(a'_3-1)) $.
\end{itemize}

\begin{theorem}[Binner (2020)]
\label{MainThm}
\normalfont The number of non-negative integer solutions of $ax + by +cz = n$ is 
$$N(a,b,c;n) =   \frac{N_1}{2abc} + \sum_{i=1}^{b'_1-1} \Big\lfloor \frac{ic'_1}{a} \Big\rfloor + \sum_{i=1}^{c'_2-1} \Big\lfloor \frac{ia'_2}{b} \Big\rfloor + \sum_{i=1}^{a'_3-1} \Big\lfloor \frac{ib'_3}{c} \Big\rfloor  - 2 .$$
\end{theorem}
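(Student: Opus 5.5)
The statement is Binner's theorem, which we are taking from \cite{Binner}. If it is to be reproved, I would reduce it to Tripathi's two-variable formula by slicing along $z$. Assume $a,b,c$ pairwise coprime, the reduction Binner uses. Then
$$N(a,b,c;n)=\sum_{z=0}^{\lfloor n/c\rfloor} N(a,b;n-cz).$$
I would apply Tripathi's theorem to each term. The first issue is that Tripathi's formula returns $-1$ or $0$ correctly only for $m\ge 0$, so the range of $z$ and the treatment of the final terms must be checked carefully. This replaces the count by a sum of three pieces: a linear sum $\sum (n-cz)/(ab)$, and two residue sums $\sum a\,a'(n-cz)/(ab)$ and $\sum b\,b'(n-cz)/(ab)$, where the residues $a'(n-cz)$ and $b'(n-cz)$ vary linearly modulo $b$ and modulo $a$ respectively as $z$ increases.

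The next step is to evaluate the residue sums. As $z$ runs over a full period of length $b$ (resp.\ $a$), the residue $a'(n-cz)\equiv -(n-cz)a^{-1}\pmod b$ runs through a complete residue system. So the sum over complete periods is $b(b+1)/2$ per period. Only a partial block of length about $\lfloor n/c\rfloor \bmod b$ remains. That partial block is an arithmetic progression reduced modulo $b$: it starts at $c'_2$-type data with step $a'_2\equiv ca^{-1}$. Its sum can be rewritten via $x \bmod b = x - b\lfloor x/b\rfloor$ as a quadratic polynomial minus $b\sum\lfloor i a'_2/b\rfloor$. This is exactly where the floor sums with upper limits $c'_2-1$ and $b'_1-1$ appear.

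The third floor sum, $\sum_{i=1}^{a'_3-1}\lfloor ib'_3/c\rfloor$, must then come from handling the number of $z$-values, $\lfloor n/c\rfloor$, itself. Writing $n = c\lfloor n/c\rfloor + r$ and expressing things symmetrically via $a'_3$ should produce it. Alternatively, I would symmetrize from the start: rather than slicing only in $z$, write the generating function $1/((1-t^a)(1-t^b)(1-t^c))$ in partial fractions over roots of unity, as Tripathi did. The contributions at $a$-th, $b$-th and $c$-th roots of unity then give three symmetric Dedekind-type sums. I would convert each into a floor sum with the stated parameters $c'_1, a'_2, b'_3$.

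The main obstacle is the bookkeeping that collects all the polynomial leftovers into exactly $N_1/(2abc)-2$. This includes the boundary effects and the careful choice of representatives in $[1,a]$ rather than $[0,a-1]$. I would first verify the constant term on small cases such as $(a,b,c)=(1,2,3)$ and then match coefficients of $n^2$, $n$, and $1$.
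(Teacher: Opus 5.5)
The paper gives no proof of this theorem. It quotes it from Binner and uses it as a black box, so your opening sentence matches the paper's treatment exactly. As a reproof, though, your sketch has two genuine gaps.

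\textbf{The $z$-slicing route.} Here the floor sums do not come out with Binner's parameters, and the third one does not come out at all. Modulo $a$ you have $b'(n-cz)\equiv b'_1+z\,cb^{-1}$, a progression with step $cb^{-1}\equiv (c'_1)^{-1}$. Writing its terms as $j\cdot cb^{-1}$ gives $j=z+u$ with $u\equiv -nc^{-1}\pmod a$. So the sum that appears is $\sum_{j=1}^{u-1}\lfloor j(cb^{-1}\bmod a)/a\rfloor$, not $\sum_{i=1}^{b'_1-1}\lfloor ic'_1/a\rfloor$; for $(a,b,c,n)=(5,2,3,1)$ these equal $1$ and $0$.

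More seriously, each partial block also has an upper end. With $r=n-c\lfloor n/c\rfloor$, the last index is $\equiv -rc^{-1}$ both modulo $a$ and modulo $b$. After removing full periods you are therefore left with one floor sum modulo $a$ and one modulo $b$, both depending on $n\bmod c$. Turning these two into the single sum $\sum_{i=1}^{a'_3-1}\lfloor ib'_3/c\rfloor$ plus explicit terms is a shifted three-term reciprocity law of Rademacher type. That is the heart of the theorem, and ``should produce it'' leaves it unproved. Slicing in one variable never exhibits the splitting of the periodic part into separate functions of $n\bmod a$, $n\bmod b$ and $n\bmod c$ that the formula asserts.

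\textbf{The partial-fraction route.} This is the right route, and it does give Binner's parameters. For $\lambda^a=1$, $\lambda\neq 1$, write $\lambda^{-n}=(\lambda^b)^{b'_1}$ and telescope $\frac{(\lambda^b)^{b'_1}}{1-\lambda^b}=\frac{1}{1-\lambda^b}-\sum_{i=0}^{b'_1-1}\lambda^{bi}$, using $\lambda^{bi}=(\lambda^c)^{ic'_1}$. Each sum $\frac1a\sum_{\lambda}\lambda^{cm}/(1-\lambda^c)$ equals $\frac{a-1}{2a}+\frac{m_0}{a}-1$ when $a\nmid m$, where $m_0$ is the least non-negative residue of $m$ modulo $a$. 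This is what produces $\sum_{i=1}^{b'_1-1}\lfloor ic'_1/a\rfloor$.

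But the telescoping also leaves an $n$-independent term, and here the proposal breaks down:
\begin{itemize}
\item At the $a$-th roots of unity this term is $\frac1a\sum_{\lambda}\frac{1}{(1-\lambda^b)(1-\lambda^c)}=\frac{a-1}{4a}-s(c'_1,a)$, with $s$ the classical Dedekind sum. The $b$-th and $c$-th roots give $s(a'_2,b)$ and $s(b'_3,c)$ in the same way.
\item The pole at $t=1$ contributes the constant $\frac1{12}\left(\frac3a+\frac3b+\frac3c+\frac a{bc}+\frac b{ca}+\frac c{ab}\right)$.
\end{itemize}
None of these are polynomial leftovers. Matching coefficients of $n^2$, $n$, $1$, or checking $(1,2,3)$, cannot identify them with the constant hidden in $N_1/(2abc)-2$.

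To close the gap you need one of two things. The first is Rademacher's three-term reciprocity, $s(c'_1,a)+s(a'_2,b)+s(b'_3,c)=-\frac14+\frac1{12}\left(\frac a{bc}+\frac b{ca}+\frac c{ab}\right)$. The second, simpler option is to evaluate the identity at $n=0$. There $N=1$ and $b'_1=a$, $c'_2=b$, $a'_3=c$, so all telescoped sums vanish and the total constant is forced to be $1$. Expanding the telescoped sums then gives exactly $N_1/(2abc)-2$ plus the three floor sums.
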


Clearly, Binner's formula involves summations of floor functions. There is no direct formula to find these summations. The best method to find these sums is a reciprocity relation given by Binner as described below.

 \begin{theorem}[Binner ($2020$)]
 \label{GenReci}
 \normalfont Let $a$, $b$, $c$, and $K$ be positive integers such that $b < a$, $c < a$, $\gcd(a,c) = 1$, and $K = \left\lfloor \frac{bc}{a} \right\rfloor$. Then $$\sum_{i=1}^{b} \left\lfloor \frac{ic}{a} \right\rfloor + \sum_{i=1}^{K} \left\lfloor \frac{ia}{c} \right\rfloor = bK.$$ 
\end{theorem}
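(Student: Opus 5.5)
We prove the reciprocity relation of Theorem \ref{GenReci} by counting lattice points: the identity is a Pick-type count of integer points in the rectangle $[1,b]\times[1,K]$, split by the line $y = cx/a$ (equivalently $x = ay/c$).

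For $1\le i\le b$, the term $\lfloor ic/a\rfloor$ counts the integers $j$ with $1\le j\le ic/a$. Since $i\le b$ we have $ic/a\le bc/a$, so every such $j$ satisfies $j\le K$. Hence the first sum equals $\#A$, where
$$A=\{(i,j)\in\mathbb{Z}^2:\ 1\le i\le b,\ 1\le j\le K,\ ja\le ic\}.$$

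Symmetrically, for $1\le j\le K$, the term $\lfloor ja/c\rfloor$ counts the integers $i$ with $1\le i\le ja/c$. Since $j\le K\le bc/a$ we have $ja/c\le b$, so every such $i$ satisfies $i\le b$. Hence the second sum equals $\#B$, where
$$B=\{(i,j)\in\mathbb{Z}^2:\ 1\le i\le b,\ 1\le j\le K,\ ic\le ja\}.$$

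Every point of the rectangle $[1,b]\times[1,K]$ lies in $A\cup B$, and there are $bK$ such points. So the identity $\#A+\#B=bK$ follows once we show that $A\cap B$ is empty, that is, that no lattice point of the rectangle satisfies $ic=ja$.

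This is the step needing care, and it is where the hypotheses enter. Suppose $ic=ja$. Since $\gcd(a,c)=1$, this forces $a\mid i$. But $1\le i\le b<a$, so this is impossible. Hence $A\cap B=\emptyset$ and $\#A+\#B=bK$. (The hypothesis $c<a$ is not needed for this argument.)

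It remains to check the edge case $K=0$. Then the second sum is empty, and we need the first sum to vanish. Indeed, $ic/a\le bc/a<1$ for all $1\le i\le b$, so every term $\lfloor ic/a\rfloor$ is $0$, and both sides of the identity equal $0$.
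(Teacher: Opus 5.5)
Your proof is correct and complete. You identify the first sum with the lattice points of $[1,b]\times[1,K]$ satisfying $ja\le ic$, and the second with those satisfying $ic\le ja$. The bounds $j\le K$ and $i\le b$, which make each count fit inside the rectangle, are checked correctly. The hypotheses $\gcd(a,c)=1$ and $b<a$ correctly exclude lattice points on the line $ic=ja$.

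The paper itself gives no proof of this statement. It quotes it from Binner's 2020 paper and never uses it in its own arguments, so there is no in-paper proof to compare against.

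Your argument is the classical Eisenstein lattice-point count behind the Gauss reciprocity relation that Binner's identity generalizes. Your side remarks are also accurate. The hypothesis $c<a$ is never needed. The case $K=0$, which the hypothesis that $K$ is positive already excludes, also works.
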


We noticed some special families of $(a,b,c)$ for which we can determine an exact formula to find the number of solutions. 
From above notation, we have $c'_1 \equiv bc^{-1}$ mod $a$. This suggests that $a,\ b,\ and\ c$ should be chosen such that the inverse of one number can be easily found modulo the others.  We realized that consecutive Fibonacci and Lucas triples are such families of triplets.

The next theorem describes our main result, which provides an exact answer to the number of solutions of the equation $$F_ix+F_{i+1}y+F_{i+2}z=n.$$ 
We need to introduce some notation.
\begin{itemize}
    \item $B'_1$ is defined such that $B'_1 \equiv (-1)^i nF_{i-2}$(mod $F_i)$, and $1 \leq B'_1 \leq F_i$.
    \item $C'_2$ is defined such that $C'_2 \equiv (-1)^i nF_{i}$ (mod $F_{i+1})$, and $1 \leq C'_2 \leq F_{i+1}$.
    \item $A'_3$ is defined such that $A'_3 \equiv (-1)^i nF_{i+1}$ (mod $F_{i+2})$, and $1 \leq A'_3 \leq F_{i+2}$.
    \item $N_{2}$ is defined such that $N_2 = n(n+F_i+F_{i+1}+F_{i+2})+F_{i+2}F_{i+1}B'_1(F_i+1-(B'_1-1))+F_iF_{i+2}C'_2(F_{i+1}+1-(C'_2-1))+F_{i+1}F_iA'_3(F_{i+2}+1-(F_{i+2}-1)(A'_3-1))$.
\end{itemize}
\begin{theorem}
\label{FibFrob}
 For a fixed natural number $i,\ F_i,\ F_{i+1}\  and \ F_{i+2}$ denote the $i^{th},\  (i+1)^{th}\  and\  (i+2)^{th}$ Fibonacci numbers. Then, with the above notation, the number of solutions of the equation $ F_i x + F_{i+1}y + F_{i+2} z = n $ is given by $$N(F_i,F_{i+1},F_{i+2};n)=\frac{N_2}{2F_iF_{i+1}F_{i+2}}+\frac{(A'_3-1)(A'_3-2)}{2} - 2.$$
\end{theorem}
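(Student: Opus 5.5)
The plan is to specialize Binner's formula (Theorem \ref{MainThm}) to $a=F_i$, $b=F_{i+1}$, $c=F_{i+2}$. Three facts make this work. Consecutive Fibonacci numbers are pairwise coprime, since $\gcd(F_i,F_{i+2})=\gcd(F_i,F_{i+1})=1$, so Binner's hypotheses hold without any transformation. Each of the six residues $b'_1,c'_1,c'_2,a'_2,a'_3,b'_3$ can be computed in closed form using Cassini's identity $F_{k-1}F_{k+1}-F_k^2=(-1)^k$. And the three floor sums then collapse, two of them to $0$ and the third to a triangular number. It then only remains to check that $N_1$ becomes $N_2$.

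The first step is computing the residues.

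Modulo $F_i$ we have $F_{i+1}\equiv F_{i-1}$ and $F_{i+2}\equiv F_{i+1}$. Cassini gives $F_{i-1}^2\equiv(-1)^i$, so $F_{i+1}^{-1}\equiv(-1)^iF_{i-1}$. Hence $b'_1\equiv -n(-1)^iF_{i-1}\equiv(-1)^i n(F_i-F_{i-1})=(-1)^i nF_{i-2}$, which gives $b'_1=B'_1$. Also $c'_1\equiv F_{i+1}F_{i+1}^{-1}\equiv 1$, so $c'_1=1$; this holds even when $F_i=1$, since the range $1\le c'_1\le a$ forces $c'_1=1$.

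Modulo $F_{i+1}$ we have $F_{i+2}\equiv F_i$. Cassini gives $F_i^2\equiv(-1)^{i+1}$, so $c'_2\equiv -n(-1)^{i+1}F_i=(-1)^inF_i$. This gives $c'_2=C'_2$, and $a'_2\equiv F_{i+2}F_i^{-1}\equiv 1$.

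Modulo $F_{i+2}$ we have $F_i\equiv -F_{i+1}$ and $F_{i+1}^2\equiv(-1)^i$. This gives $a'_3\equiv(-1)^inF_{i+1}$, so $a'_3=A'_3$. It also gives $b'_3\equiv F_iF_{i+1}^{-1}\equiv -1$, so $b'_3=F_{i+2}-1$, which is valid because $F_{i+2}\ge 2$.

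The second step evaluates the three floor sums.

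Since $c'_1=1$ and $b'_1-1<F_i$, every term $\lfloor j/F_i\rfloor$ of the first sum is $0$. The same argument shows the second sum vanishes, using $a'_2=1$.

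For the third sum, $1\le j\le A'_3-1\le F_{i+2}-1$, so
\[
\Big\lfloor \tfrac{j(F_{i+2}-1)}{F_{i+2}}\Big\rfloor=\Big\lfloor j-\tfrac{j}{F_{i+2}}\Big\rfloor=j-1 .
\]
Summing gives $\sum_{j=1}^{A'_3-1}(j-1)=\frac{(A'_3-1)(A'_3-2)}{2}$.

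The final step is substituting $c'_1=a'_2=1$ and $b'_3=F_{i+2}-1$ into $N_1$, which yields exactly the displayed $N_2$. Combined with the floor sums, Theorem \ref{MainThm} then gives the stated formula.

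The only genuinely delicate point is the sign bookkeeping in the three Cassini applications, where the parity of $i$ must be tracked modulo each of the three moduli. I would verify it by checking one small case, such as $i=3$, against the formula. The boundary cases need a separate look: $i=1,2$ (where $F_i=1$), and $B'_1=F_i$ or $A'_3=F_{i+2}$, where the floor-sum ranges reach their maximum. The arguments above already cover these, since every index stays strictly below the relevant modulus.
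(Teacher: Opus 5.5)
Your proposal is correct and follows the paper's proof essentially step for step. Like the paper, you specialize Binner's formula to $(F_i,F_{i+1},F_{i+2})$ and use Cassini's identity to get $b'_1=B'_1$, $c'_2=C'_2$, $a'_3=A'_3$, $c'_1=a'_2=1$ and $b'_3=F_{i+2}-1$; then $N_1=N_2$, two floor sums vanish, and the third equals $\frac{(A'_3-1)(A'_3-2)}{2}$. Your explicit handling of the edge cases ($F_i=1$ and $F_{i+2}\ge 2$) and your cleaner reduction $F_{i+1}\equiv F_{i-1}\pmod{F_i}$ add some care the paper omits, but they do not constitute a different route.
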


\section{Formula for $N(F_i, F_{i+1},F_{i+2};n)$}
\label{section2}

In this section, we prove our exact formula for the number of solutions, $N(F_i,F_{i+1},F_{i+2})$ of the equation $F_ix+F_{i+1}y+F_{i+2}z=n$ where $i$ and $n$ are given natural numbers. Clearly, $F_i,F_{i+1}\ and\ F_{i+2}$ are pairwise coprime numbers.

\label{StatProofch3th}

The next theorem describes our main result, which provides an exact answer to the number of solutions of the equation $F_ix+F_{i+1}y+F_{i+2}z=n$.\\
Before that, we need to introduce some notations.
\begin{itemize}
    \item $B'_1$ is defined such that $B'_1 \equiv (-1)^i nF_{i-2}(modF_i)$, and $1 \leq B'_1 \leq a$.
    \item $C'_2$ is defined such that $C'_2 \equiv (-1)^i nF_{i}(modF_{i+1})$, and $1 \leq C'_2 \leq b$.
    \item $A'_3$ is defined such that $A'_3 \equiv (-1)^i nF_{i+1}(modF_{i+2})$, and $1 \leq A'_3 \leq c$.
    \item $N_{2}$ is defined such that $N_2 = n(n+F_i+F_{i+1}+F_{i+2})+F_{i+2}F_{i+1}B'_1(F_i+1-(B'_1-1))+F_iF_{i+2}C'_2(F_{i+1}+1-(C'_2-1))+F_{i+1}F_iA'_3(F_{i+2}+1-(F_{i+2}-1)(A'_3-1))$.
\end{itemize}
\begin{theorem}
\label{FibFrob}
\normalfont For a fixed natural number $i,\ F_i,\ F_{i+1}\  and \ F_{i+2}$ denote the $i^{th},\  (i+1)^{th}\  and\  (i+2)^{th}$ Fibonacci numbers. Then, with the above notations, the number of solutions of the equation $ F_i x + F_{i+1}y + F_{i+2} z = n $ is given by $$N(F_i,F_{i+1},F_{i+2};n)=\frac{N_2}{2F_iF_{i+1}F_{i+2}}+\frac{(A'_3-1)(A'_3-2)}{2} - 2.$$
\end{theorem}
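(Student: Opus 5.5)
The plan is to specialize Binner's formula (Theorem \ref{MainThm}) to $a=F_i$, $b=F_{i+1}$, $c=F_{i+2}$. These are pairwise coprime, so the theorem applies directly. The three floor-function sums should then either vanish or collapse to a triangular number, because every modular inverse that appears can be read off from Cassini's identity $F_{k-1}F_{k+1}-F_k^2=(-1)^k$ and from the recurrence $F_{k+2}=F_{k+1}+F_k$.

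First I will compute the six auxiliary quantities of Theorem \ref{MainThm}.

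Modulo $a=F_i$:
\begin{itemize}
\item Cassini with $k=i$ gives $F_{i+1}F_{i-1}\equiv(-1)^i$, so $b^{-1}\equiv(-1)^iF_{i-1}$.
\item Hence $-nb^{-1}\equiv(-1)^{i+1}nF_{i-1}\equiv(-1)^inF_{i-2}$, using $F_{i-1}\equiv -F_{i-2}\pmod{F_i}$. Therefore $b'_1=B'_1$.
\item Since $c=a+b\equiv b$, we get $c'_1\equiv 1$, so $c'_1=1$. This is also true when $a=1$, because the range $1\le c'_1\le a$ forces it.
\end{itemize}

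Modulo $b=F_{i+1}$:
\begin{itemize}
\item We have $c\equiv a$, so $a'_2=1$.
\item Cassini with $k=i+1$ gives $F_i^2\equiv F_iF_{i+2}\equiv(-1)^{i+1}$. So $c^{-1}\equiv a^{-1}\equiv(-1)^{i+1}F_i$, and $-nc^{-1}\equiv(-1)^inF_i$. Therefore $c'_2=C'_2$.
\end{itemize}

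Modulo $c=F_{i+2}$:
\begin{itemize}
\item We have $a\equiv -b$, so $b'_3\equiv ab^{-1}\equiv -1$, giving $b'_3=c-1$.
\item Cassini gives $F_{i+1}^2\equiv(-1)^i$. Then $a^{-1}\equiv -F_{i+1}^{-1}\equiv(-1)^{i+1}F_{i+1}$, so $-na^{-1}\equiv(-1)^inF_{i+1}$. Therefore $a'_3=A'_3$.
\end{itemize}

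Next I will evaluate the sums.
\begin{itemize}
\item Since $c'_1=1$ and $j\le b'_1-1<a$, every term $\lfloor j/a\rfloor$ is $0$, so the first sum vanishes.
\item The second sum vanishes in the same way, since $a'_2=1$ and $j\le c'_2-1<b$.
\item In the third sum, $j\le A'_3-1\le c-1$, so $\lfloor j(c-1)/c\rfloor=\lfloor j-j/c\rfloor=j-1$. The sum is therefore $\sum_{j=1}^{A'_3-1}(j-1)=\frac{(A'_3-1)(A'_3-2)}{2}$.
\end{itemize}
Finally, substituting $c'_1=a'_2=1$, $b'_3=c-1$ and the identifications above into $N_1$ should reproduce $N_2$ term by term. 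For instance, the first bracket becomes $a+1-(B'_1-1)$.

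I expect the main obstacle to be the sign bookkeeping in the inverse computations, together with the degenerate small indices:
\begin{itemize}
\item For $i=1,2$, one has $F_i=1$ and $F_{i-2}\in\{F_{-1},F_0\}=\{1,0\}$. These congruences are taken modulo $1$, so they are trivially fine, and the range conventions still force $c'_1=1$.
\item For $i=1$, one has $c=2$ and $b'_3=1=c-1$, which is consistent with the general case.
\end{itemize}
I will check these edge cases explicitly after the general computation. Binner's reciprocity theorem (Theorem \ref{GenReci}) is not needed here, since each sum is evaluated directly.
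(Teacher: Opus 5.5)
Your proposal is correct and takes essentially the same route as the paper. You specialize Binner's formula to $(F_i,F_{i+1},F_{i+2})$, use Cassini's identity and the recurrence to get $b'_1=B'_1$, $c'_2=C'_2$, $a'_3=A'_3$, $c'_1=a'_2=1$ and $b'_3=F_{i+2}-1$, and then evaluate the floor sums directly (two vanish, the third equals $\frac{(A'_3-1)(A'_3-2)}{2}$); your explicit check of the degenerate cases $i=1,2$, where $F_i=1$, is a small addition the paper leaves implicit.
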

\textbf{Proof.}
By Binner's result Theorem \ref{MainThm}, we have
\begin{equation}
\label{General}
N(a,b,c;n) = \frac{N_1}{2abc} + \sum_{i=1}^{b'_1-1} \left\lfloor \frac{ic'_1}{a} \right\rfloor + \sum_{i=1}^{c'_2-1} \left\lfloor \frac{ia'_2}{b} \right\rfloor + \sum_{i=1}^{a'_3-1} \left\lfloor \frac{ib'_3}{c} \right\rfloor - 2 .
\end{equation}

We begin by computing the quantities $b'_1,c'_2,a'_3$, and $c'_1,a'_2,b'_3$ in the case when $a=F_i,$ $b=F_{i+1}$ and $c=F_{i+2}$.
We recall these quantities from section 1.
\begin{itemize}

	\item Define $ b'_1$ such that $b'_1 \equiv - nb^{-1}$ (mod $a$) with $ 1\leq b'_1 \leq a$. Moreover, define $ c'_1$ such that $c'_1 \equiv bc^{-1}$ (mod $a$) with $ 1\leq c'_1 \leq a$.

	\item Define $ c'_2$ such that $c'_2 \equiv - nc^{-1}$ (mod $b$) with $ 1\leq c'_2 \leq b$. Moreover, define $ a'_2$ such that $a'_2 \equiv ca^{-1}$ (mod $b$) with $ 1\leq a'_2\leq b$.

	\item Define $ a'_3$ such that $a'_3 \equiv - na^{-1}$ (mod $c$) with $ 1\leq a'_3 \leq c$. Moreover, define $b'_3$ such that $b'_3 \equiv ab^{-1}$ (mod $c$) with $ 1\leq b'_3 \leq c$.

	\item Define $N_1 = n(n + a + b +c) + cbb'_1(a+1-c'_1(b'_1-1)) + acc'_2(b+1 - a'_2(c'_2-1))$ $+ baa'_3 (c+1-b'_3(a'_3-1)) $.

\end{itemize}
Firstly, we will compute $b'_1$ and prove that $b'_1 = B'_1$. In our case, we have $b'_1 \equiv -nF_{i+1}^{-1}(modF_i)$. Therefore,  to obtain $b'_1$ , we first need to find the 
modular inverse of $F_{i+1}$ with respect to $F_i$. The above quantities are greatly simplified using the properties of Fibonacci numbers, especially the Cassini's identity, namely
$$ F_{i+1}F_{i-1} = F_i^2 + (-1)^i.$$
Reducing modulo $F_i$ and
multiplying by $(-1)^i$ on both sides in the above equation, we get
$$F_{i+1}((-1)^iF_{i-1})\equiv 1 \bmod F_i.$$
\begin{equation}
    \label{Inverse}
    F_{i+1}^{-1} \equiv (-1)^iF_{i-1} \hspace{.2cm} (\bmod \hspace{.2cm}F_i).
\end{equation}
Therefore,
\begin{align*}
b'_1 &\equiv -nF_{i+1}^{-1} \hspace{.2cm}(\bmod \hspace{.2cm}F_i) \\
&\equiv -n (-1)^iF_{i-1} \hspace{.2cm} (\bmod \hspace{.2cm}F_i) \\
&\equiv (-1)^i n (-F_{i-1}) \hspace{.2cm} (\bmod \hspace{.2cm}F_i) \\
&\equiv (-1)^i n (F_i-F_{i-1}) \hspace{.2cm} (\bmod \hspace{.2cm}F_i) \\
&\equiv (-1)^i n F_{i-2} \hspace{.2cm} (\bmod \hspace{.2cm}F_i) \\
&\equiv B'_1 \hspace{.2cm}(\bmod \hspace{.2cm}F_i). 
\end{align*}
It is easy to see that $1 \leq b'_1, B'_1 \leq F_i$. Thus, $b'_1 = B'_1$. Next, we prove that $c'_2 = C'_2$. 
\\
\\
We have $c'_2 \equiv -nF_{i+2}^{-1}(modF_{i+1})$.  Therefore to calculate $c'_2$, we need to find the modular inverse of $F_{i+2}$ with respect to $F_{i+1}$. Replacing
$i$ with $i+1$ in \eqref{Inverse}, we get
$$F_{i+2}^{-1} \equiv (-1)^{i+1}F_{i} \hspace{.2cm} (\bmod \hspace{.2cm}F_{i+1}).$$
Hence, 
\begin{align*}
c'_2 &\equiv -nF_{i+2}^{-1} \hspace{.2cm}(\bmod \hspace{.2cm}F_{i+1}) \\
&\equiv -n (-1)^{i+1}F_{i} \hspace{.2cm} (\bmod \hspace{.2cm}F_{i+1}) \\
&\equiv (-1)^i n F_{i}) \hspace{.2cm} (\bmod \hspace{.2cm}F_{i+1}) \\
&\equiv C'_2 \hspace{.2cm}(\bmod \hspace{.2cm}F_{i+1}). 
\end{align*}
It is also easy to see that $1 \leq c'_2, C'_2 \leq F_{i+1}$. Thus, $c'_2 = C'_2$. Now, we prove that $a'_3 = A'_3$.

We have
\begin{align}
\label{Ii}
a'_3 &\equiv -nF_{i}^{-1} \hspace{.2cm}(\bmod \hspace{.2cm}F_{i+2}) \notag\\
&\equiv -n (F_{i+2}-F_{i+1})^{-1} \hspace{.2cm} (\bmod \hspace{.2cm}F_{i+2}) \notag\\
&\equiv -n (-F_{i+1})^{-1} \hspace{.2cm} (\bmod \hspace{.2cm}F_{i+2}) \notag\\
&\equiv n F_{i+1}^{-1} \hspace{.2cm} (\bmod \hspace{.2cm}F_{i+2}) 
\end{align}

From Cassini's identity, we have $F_{i+3}F_{i+1} = F_{i+2}^2 + (-1)^i.$
Again, reducing modulo $F_{i+2}$, and multiplying by $(-1)^i$ on both sides of this equation, we have $F_{i+3}F_{i+1} \equiv (-1)^i (\bmod F_{i+2}).$ which gives $ F_{i+1} \left((-1)^i F_{i+3}\right) \equiv 1 \bmod F_{i+2}.$ Therefore, 
\begin{align}
\label{IIii}
F_{i+1}^{-1} &\equiv (-1)^i F_{i+3} \hspace{.2cm} (\bmod \hspace{.2cm} F_{i+2}) \notag\\ 
& \equiv (-1)^i \left(F_{i+2} + F_{i+1}\right) \hspace{.2cm} (\bmod \hspace{.2cm} F_{i+2}) \notag\\
& \equiv (-1)^i F_{i+1} \hspace{.2cm} (\bmod \hspace{.2cm} F_{i+2}).
\end{align}

From equations \eqref{Ii} and \eqref{IIii}, we obtain

\begin{align*}
    a'_3 &\equiv (-1)^i n F_{i+1} \hspace{.2cm} (\bmod \hspace{.2cm} F_{i+2}) \\
    &\equiv A'_3 \hspace{.2cm} (\bmod \hspace{.2cm} F_{i+2}). 
\end{align*}

Therefore, $a'_3 \equiv A'_3$ (mod $F_{i+2}$), and $1 \leq a'_3, A'_3 \leq F_{i+2}$. 

Till now, we have simplified the quantities $b'_1 = B'_1$, $c'_2 = C'_2$ and $a'_3 = A'_3$. Next, we simplify the quantities $c'_1$, $a'_2$ and $b'_3$.
In our case, we have
\begin{align*}
c'_1 &\equiv bc^{-1} \hspace{.2cm} (\bmod \hspace{.2cm}a) \\
&\equiv F_{i+1}F_{i+2}^{-1} \hspace{.2cm}(\bmod \hspace{.2cm}F_i) \\
&\equiv (F_{i+2}-F_i) F_{i+2}^{-1} \hspace{.2cm} (\bmod \hspace{.2cm}F_i) \\
&\equiv F_{i+2} F_{i+2}^{-1} \hspace{.2cm} (\bmod \hspace{.2cm}F_i) \\
&\equiv 1 \hspace{.2cm} (\bmod \hspace{.2cm}F_i). 
\end{align*}

Thus, $c'_1 \equiv 1$ mod $F_i$, and $1 \leq c'_1 \leq F_i$. That is, $c'_1 = 1$.
Similarly, 

\begin{align*}
a'_2 &\equiv ca^{-1} \hspace{.2cm} (\bmod \hspace{.2cm}b) \\
&\equiv F_{i+2}F_{i}^{-1} \hspace{.2cm} (\bmod \hspace{.2cm}F_{i+1}) \\
&\equiv (F_{i+1}+F_i) F_{i}^{-1} \hspace{.2cm} (\bmod \hspace{.2cm}F_{i+1}) \\
&\equiv F_{i} F_{i}^{-1} \hspace{.2cm} (\bmod \hspace{.2cm} F_{i+1}) \\
&\equiv 1 \hspace{.2cm} (\bmod \hspace{.2cm}F_{i+1}). 
\end{align*}

Thus, $a'_2 \equiv 1$ mod $F_{i+1}$, and $1 \leq a'_2 \leq F_{i+1}$. That is, $a'_2 = 1$. Similarly, 

\begin{align*}
b'_3 &\equiv ab^{-1} \hspace{.2cm} (\bmod \hspace{.2cm}c) \\
&\equiv F_{i}F_{i+1}^{-1} \hspace{.2cm} (\bmod \hspace{.2cm}F_{i+2}) \\
&\equiv (F_{i+2}-F_{i+1}) F_{i}^{-1} \hspace{.2cm} (\bmod \hspace{.2cm}F_{i+2}) \\
&\equiv -F_{i+1} F_{i+1}^{-1} \hspace{.2cm} (\bmod \hspace{.2cm}F_{i+2}) \\
&\equiv -1 \hspace{.2cm} (\bmod \hspace{.2cm}F_{i+2}). 
\end{align*}

Thus, $b'_3 \equiv -1$ mod $F_{i+2}$, and $1 \leq b'_3 \leq F_{i+2}$. That is, $b'_3 = F_{i+2} - 1$. 
Putting the values of $b'_1, c'_2, a'_3$, and of $c'_1, a'_2, b'_3$ in the expression for $N_1$, we get $N_1 = N_2$. Finally, we solve the summations involved in Binner's formula. Using, $b'_1 \leq F_i$, $c'_2 \leq F_{i+1}$, and $a'_3 \leq F_{i+2}$, we have

$$\sum_{i=1}^{b'_1-1} \left\lfloor \frac{ic'_1}{a} \right\rfloor=\sum_{i=1}^{b'_1-1} \left\lfloor \frac{ic'_1}{F_i} \right\rfloor =\sum_{i=1}^{b'_1-1} \left\lfloor \frac{i}{F_i} \right\rfloor=0,$$\\
$$\sum_{i=1}^{c'_2-1} \left\lfloor \frac{ia'_2}{b} \right\rfloor=\sum_{i=1}^{c'_2-1} \left\lfloor \frac{ia'_2}{F_{i+1}} \right\rfloor=\sum_{i=1}^{c'_2-1} \left\lfloor \frac{i}{F_{i+1}} \right\rfloor=0, $$

\begin{align*}
\sum_{j=1}^{a'_3-1} \left\lfloor \frac{jb'_3}{c} \right\rfloor &=\sum_{j=1}^{a'_3-1} \left\lfloor \frac{j(F_{i+2}-1)}{F_{i+2}} \right\rfloor\\
&=\sum_{j=1}^{a'_3-1} \left\lfloor j- \frac{j}{F_{i+2}} \right\rfloor\\     
&=\sum_{j=1}^{a'_3-1}(j-1)\\
&=\frac{(a'_3-2)(a'_3-1)}{2}.
\end{align*}

Putting all the values of these summations back in \eqref{General} completes the proof of our Theorem \ref{FibFrob}. 

Thus, we have an exact formula for finding the number of solutions of the equation $ F_i x + F_{i+1}y + F_{i+2} z = n $ for given natural numbers $i$ and $n$. Let us apply our formula for an example.
\begin{example}
\normalfont

Suppose $i=12$. Then, $F_{i} = 144$, $F_{i+1} = 233$, and $F_{i+2} = 377$. Consider the equation $144x+233y+377z=425896$.
Since $i = 12$ is even, thus $(-1)^i = 1$. Therefore, we have
\begin{align*}
B'_1&\equiv nF_{i-2}\hspace{.2cm}(\bmod \hspace{.2cm}F_i)\\
&\equiv 425896\times 55 \hspace{.2cm}(\bmod \hspace{.2cm}144)\\
&\equiv 88 \hspace{.2cm} (\bmod \hspace{.2cm}144).
\end{align*}

Clearly, $1 \leq B'_1 \leq 143$. Thus, $B'_1 = 88$. Similarly,
\begin{align*}
C'_2&\equiv nF_i \hspace{.2cm} (\bmod \hspace{.2cm}F_{i+1})\\
&\equiv 425896\times 144 \hspace{.2cm}(\bmod \hspace{.2cm}233)\\
&\equiv 162 \hspace{.2cm} (\bmod \hspace{.2cm}233)\\
\end{align*}
Clearly, $1 \leq C'_2 \leq 232$. Thus, $C'_2 = 162$. Similarly,
\begin{align*}
A'_3 &\equiv nF_{i+1} \hspace{.2cm} (\bmod \hspace{.2cm}F_{i+2})\\
&\equiv 425896\times 233 \hspace{.2cm} (\bmod \hspace{.2cm}377)\\
&\equiv 205 \hspace{.2cm} (\bmod \hspace{.2cm}377)\\
\end{align*}
Clearly, $1 \leq C'_2 \leq 376$. Thus, $A'_3 = 205$.
\\
\\
We have,
$N_2 = n(n+F_i+F_{i+1}+F_{i+2})+F_{i+2}F_{i+1}B'_1(F_i+1-(B'_1-1))+F_iF_{i+2}C'_2(F_{i+1}+1-(C'_2-1))+F_{i+1}F_iA'_3(F_{i+2}+1-(F_{i+2}-1)(A'_3-1))$.
\\ 
\\
Putting the values of $n,F_i,F_{i+1},F_{i+2},B'_1,C'_2$, and $A'_3$ in the expression of $N_2$, we find that $$N_2 = -342183561408.$$ Then, Substituting the values of $N_2$ and $A'_3$ in Theorem \ref{FibFrob}, we get $$N(144,233,377; 425896) = 7178.$$
Hence, The total number of non-negative integer solutions of $144x+233y+377z=425896$ are 7178.
\end{example}

\section{Formula for $N(L_i,L_{i+1},L_{i+2};n)$}
\label{Section3}
Now we prove our next exact formula for the number of non-negative integer solutions $N(L_i,L_{i+1},L_{i+2})$ of the equation  $L_ix+L_{i+1}y+L_{i+2}z=n$.
Let us introduce some notation.
\begin{itemize}
    \item $B''_1$ is defined such that $B''_1 \equiv (-1)^{i+1} nL_{i-2}5^{-1}(modL_i)$, and $1 \leq B''_1 \leq a$.
    \item $C''_2$ is defined such that $C''_2 \equiv (-1)^{i+1} nL_{i}5^{-1}(modL_{i+1})$, and $1 \leq C''_2 \leq b$.
    \item $A''_3$ is defined such that $A''_3 \equiv (-1)^{i+1}nL_{i+1}5^{-1}(modL_{i+2})$, and $1 \leq A''_3 \leq c$.
    \item $N_{3}$ is defined such that $N_3 = n(n+L_i+L_{i+1}+L_{i+2})+L_{i+2}L_{i+1}B''_1(L_i+1-(B''_1-1))+L_iL_{i+2}C''_2(L_{i+1}+1-(C''_2-1))+L_{i+1}L_iA''_3(L_{i+2}+1-(L_{i+2}-1)(A''_3-1))$.
\end{itemize}

\begin{theorem}
\label{FibFrobb}
\normalfont For a fixed natural number $i,\ L_i,\ L_{i+1}\ and \ L_{i+2}$ denote the $i^{th},\  (i+1)^{th}\  and\ (i+2)^{th}$ Lucas numbers. Then, with the above notations, the number of solutions of the equation $ L_i x + L_{i+1}y + L_{i+2} z = n $ is given by $$N(L_i,L_{i+1},L_{i+2};n)=\frac{N_3}{2L_iL_{i+1}L_{i+2}}+\frac{(A''_3-1)(A''_3-2)}{2} - 2.$$
\end{theorem}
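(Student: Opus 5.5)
The plan is to follow the proof of Theorem \ref{FibFrob} step for step. We apply Binner's formula (Theorem \ref{MainThm}) with $a=L_i$, $b=L_{i+1}$, $c=L_{i+2}$ and compute the six auxiliary quantities $b'_1,c'_2,a'_3,c'_1,a'_2,b'_3$ in closed form.

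\textbf{Preliminary facts.} Before using Binner's formula we need three things.
\begin{enumerate}
\item The numbers $L_i,L_{i+1},L_{i+2}$ are pairwise coprime. This follows from $\gcd(L_i,L_{i+1})=1$, which Euclid's algorithm on $L_{i+1}=L_i+L_{i-1}$ reduces to $\gcd(L_1,L_0)=\gcd(1,2)=1$, and from $\gcd(L_i,L_{i+2})=\gcd(L_i,L_{i+1})$.
\item No Lucas number is divisible by $5$, since $L_n \bmod 5$ runs through the period $2,1,3,4,2,1,\dots$. Hence $5^{-1}$ exists modulo each $L_j$, and the quantities $B''_1,C''_2,A''_3$ are well defined.
\item The Lucas analogue of Cassini's identity,
$$L_{j+1}L_{j-1}-L_j^2=5(-1)^{j+1},$$
which follows from Binet's formula or by induction on $j$.
\end{enumerate}

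\textbf{Computing $b'_1$, $c'_2$, $a'_3$.} Reducing the identity modulo $L_j$ gives
$$L_{j+1}^{-1}\equiv (-1)^{j+1}L_{j-1}5^{-1} \pmod{L_j}.$$
\begin{itemize}
\item With $j=i$: $b'_1\equiv -nL_{i+1}^{-1}\equiv (-1)^{i+1}n(-L_{i-1})5^{-1}$. Since $-L_{i-1}\equiv L_i-L_{i-1}=L_{i-2}\pmod{L_i}$, this gives $b'_1\equiv B''_1$.
\item With $j=i+1$: $L_{i+2}^{-1}\equiv (-1)^{i}L_i5^{-1}\pmod{L_{i+1}}$, so $c'_2\equiv (-1)^{i+1}nL_i5^{-1}\equiv C''_2$.
\item For $a'_3$: since $L_i\equiv -L_{i+1}\pmod{L_{i+2}}$, we get $a'_3\equiv nL_{i+1}^{-1}$. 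The identity with $j=i+2$ reads $L_{i+3}L_{i+1}\equiv 5(-1)^{i+1}\pmod{L_{i+2}}$, and $L_{i+3}\equiv L_{i+1}\pmod{L_{i+2}}$. Hence $L_{i+1}^{-1}\equiv (-1)^{i+1}L_{i+1}5^{-1}$, and $a'_3\equiv A''_3$.
\end{itemize}
All of these quantities lie in the same range $[1,\text{modulus}]$, so the congruences are equalities.

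\textbf{Computing $c'_1$, $a'_2$, $b'_3$.} This part is verbatim the Fibonacci argument, since it only uses the recurrence $L_{i+2}=L_{i+1}+L_i$:
\begin{itemize}
\item $c'_1\equiv L_{i+1}L_{i+2}^{-1}\equiv 1\pmod{L_i}$, so $c'_1=1$.
\item $a'_2\equiv L_{i+2}L_i^{-1}\equiv 1\pmod{L_{i+1}}$, so $a'_2=1$.
\item $b'_3\equiv L_iL_{i+1}^{-1}\equiv -1\pmod{L_{i+2}}$, so $b'_3=L_{i+2}-1$.
\end{itemize}
Substituting these values into $N_1$ gives exactly $N_3$.

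\textbf{The floor sums.} The first two sums vanish, because $j\le b'_1-1<L_i$ and $j\le c'_2-1<L_{i+1}$ force every floor to be $0$. In the third sum, $1\le j\le A''_3-1\le L_{i+2}-1$, so
$$\left\lfloor \frac{j(L_{i+2}-1)}{L_{i+2}}\right\rfloor=j-1,$$
and the sum equals $\frac{(A''_3-1)(A''_3-2)}{2}$. This yields the stated formula.

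\textbf{Main obstacle.} The only genuinely new ingredients relative to Theorem \ref{FibFrob} are the Lucas Cassini identity with the correct sign $5(-1)^{j+1}$ and the invertibility of $5$ modulo Lucas numbers. I would verify the sign carefully, for example $L_2L_0-L_1^2=3\cdot 2-1=5$, which matches $5(-1)^{2}$ at $j=1$.

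I would also check the degenerate small cases separately. For $i=1$ we have $L_1=1$, so everything modulo $L_1$ is trivial and $b'_1=1$; the identity at $j=1$ involves $L_0=2$, and the reduction $-L_{i-1}\equiv L_{i-2}$ at $i=1$ involves $L_{-1}=-1$. For $i=2$, $L_{i-2}=L_0=2$ appears in the definition of $B''_1$. These checks confirm that the formulas remain valid at the bottom of the range.
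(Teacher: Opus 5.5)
Your proposal is correct and follows essentially the same route as the paper: apply Binner's formula with $a=L_i$, $b=L_{i+1}$, $c=L_{i+2}$, use the Lucas analogue of Cassini's identity to identify $b'_1,c'_2,a'_3$ with $B''_1,C''_2,A''_3$, use the recurrence to get $c'_1=a'_2=1$ and $b'_3=L_{i+2}-1$, and then evaluate the three floor sums. Your explicit checks of pairwise coprimality and of $5\nmid L_j$ (the paper covers the latter with its mod-$5$ table) are small improvements that do not change the argument.
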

\textbf{Proof.}
By Binner's formula Theorem \ref{MainThm}, we have
\begin{equation}
\label{Generall}
N(a,b,c;n) = \frac{N_1}{2abc} + \sum_{i=1}^{b'_1-1} \left\lfloor \frac{ic'_1}{a} \right\rfloor + \sum_{i=1}^{c'_2-1} \left\lfloor \frac{ia'_2}{b} \right\rfloor + \sum_{i=1}^{a'_3-1} \left\lfloor \frac{ib'_3}{c} \right\rfloor - 2 .
\end{equation}
When $a,$ $b,$ and $c$ are chosen as $L_i,$ $L_{i+1},$ and $L_{i+2}$ respectively then the quantities $c'_1,\ a'_2\ and\ b'_3$ can be easily solved using the Cassini’s Identity for Lucas numbers, namely
\begin{equation}
\label{CI}
L_i^2-L_{i-1}L_{i+1}=(-1)^i5
\end{equation}
In our case, we have $b'_1 \equiv -nL_{i+1}^{-1}(modL_i)$.  Therefore, to calculate $b'_1$ we need to evaluate modular inverse of $L_{i+1}$ with respect to $L_i$.
Replacing i with i+1 in \eqref{CI}, we get
$$L_{i+1}^2-L_iL_{i+2}=(-1)^{i+1}5$$
Reducing the above equation modulo $L_i$ and 
then multiplying both sides with $(-1)^{i+1}5^{-1}$, we obtain $$L_{i+1} \left((-1)^{i+1}L_{i+1}\ 5^{-1}\right) \equiv 1 \bmod L_i.$$ 

From here, we get
    \label{Inversee}
    \begin{align*}
L_{i+1}^{-1} &\equiv (-1)^{i+1}L_{i+1}\ 5^{-1} \hspace{.2cm} (\bmod \hspace{.2cm}L_i).\\
&\equiv (-1)^{i+1}\left(L_{i}+L_{i-1}\right)\ 5^{-1} \hspace{.2cm} (\bmod \hspace{.2cm}L_i).\\
&\equiv (-1)^{i+1}L_{i-1}\ 5^{-1} \hspace{.2cm} (\bmod \hspace{.2cm}L_i).
    \end{align*}
so we need the $5^{-1}(\bmod \hspace{.2cm} L_i)$\\
Considering first few Lucas numbers\\
$$L_0=2, L_1=1, L_2=3, L_3=4, L_4=7,L_5=11, L_6=18, L_7=29, L_8=47 \ldots$$
Clearly,\\
$$L_0\equiv 2(\bmod \hspace{.2cm} 5), L_1\equiv 1(\bmod \hspace{.2cm} 5), L_2\equiv 3(\bmod \hspace{.2cm} 5), L_3\equiv 4(\bmod \hspace{.2cm} 5)$$
$$L_4\equiv 2(\bmod \hspace{.2cm} 5), L_5\equiv 1(\bmod \hspace{.2cm} 5), L_6\equiv 3(\bmod \hspace{.2cm} 5), L_7\equiv 4(\bmod \hspace{.2cm} 5)$$\\
Note that $L_{4k}\equiv 2(\bmod \hspace{.2cm} 5),$ $L_{4k+1}\equiv 1(\bmod \hspace{.2cm} 5)$ and $L_{4k+2}\equiv 3(\bmod \hspace{.2cm} 5).$Therefore,
$L_{4k}^{-1}
\equiv 3(\bmod \hspace{.2cm}5),$ $L_{4k+1}^{-1}
\equiv 1(\bmod \hspace{.2cm}5)$ and $L_{4k+2}^{-1}
\equiv 2(\bmod \hspace{.2cm}5)$ respectively.
When i=4k,
$$L_{4k}^{-1}\equiv 3(\bmod \hspace{.2cm} 5)$$
$$3L_{4k}\equiv 1(\bmod \hspace{.2cm} 5)$$
$$3L_{4k}=1+5u$$
$$5(-u)\equiv 1(\bmod \hspace{.2cm} L_{4k})$$
$$5^{-1}\equiv \frac{1-3L_{4k}}{5}(mod \hspace{.2cm} L_{4k})$$
Similarly, we also have
$$5^{-1}=\left\{\begin{array}{llll}
    \frac{1-L_{4k+1}}{5}(\bmod L_{4k+1}) ;&i=4k+1\\
    \\
    \frac{1-2L_{4k+2}}{5}(\bmod L_{4k+2}) ;&i=4k+2\\
    \\
    \frac{1-4L_{4k+3}}{5}(\bmod L_{4k+3}) ;&i=4k+3
    \end{array}\right.
    $$
Firstly, we will compute the quantities $b'_1$, $c'_2$ and $a'_3$.
\begin{align*}
b'_1 &\equiv -nL_{i+1}^{-1} \hspace{.2cm}(\bmod \hspace{.2cm}L_i) \\
&\equiv -n (-1)^{i+1}L_{i-1}\  5^{-1} \hspace{.2cm} (\bmod \hspace{.2cm}L_i) \\
&\equiv (-1)^{i+1} n (-L_{i-1})\  5^{-1} \hspace{.2cm} (\bmod \hspace{.2cm}L_i) \\
&\equiv (-1)^{i+1} n (L_i-L_{i-1})\  5^{-1} \hspace{.2cm} (\bmod \hspace{.2cm}L_i) \\
&\equiv (-1)^{i+1} n L_{i-2}\  5^{-1} \hspace{.2cm} (\bmod \hspace{.2cm}L_i) \\
&\equiv B''_1 \hspace{.2cm}(\bmod \hspace{.2cm}L_i). 
\end{align*}

Note that $c'_2 \equiv -nL_{i+2}^{-1}(\bmod L_{i+1})$. To find $c'_2$, we first need modular inverse of $L_{i+2}$ with respect to $L_{i+1}$. Replacing $i$ with $i+2$ in equation\eqref{CI}, we obtain
$$L_{i+2}^{-1} \equiv (-1)^{i+2}L_{i}\ 5^{-1}\hspace{.2cm} (\bmod \hspace{.2cm}L_{i+1}).$$
Therefore, 
\begin{align*}
c'_2 &\equiv -nL_{i+2}^{-1} \hspace{.2cm}(\bmod \hspace{.2cm}L_{i+1}) \\
&\equiv -n (-1)^{i+2}L_{i}\ 5^{-1} \hspace{.2cm} (\bmod \hspace{.2cm}L_{i+1}) \\
&\equiv (-1)^{i+3} n L_{i}\ 5^{-1} \hspace{.2cm} (\bmod \hspace{.2cm}L_{i+1}) \\
&\equiv (-1)^{i+1} n L_{i}\ 5^{-1} \hspace{.2cm} (\bmod \hspace{.2cm}L_{i+1}) \\
&\equiv C''_2 \hspace{.2cm}(\bmod \hspace{.2cm}L_{i+1}). 
\end{align*}

Note that $a'_3 \equiv -nL_{i}^{-1}(modL_{i+2})$.  To find $a'_3$, we first need modular inverse of $L_{i}$ with respect to $L_{i+2}$. We have,

\begin{align}
\label{I}
a'_3 &\equiv -nL_{i}^{-1} \hspace{.2cm}(\bmod \hspace{.2cm}L_{i+2}) \notag\\
&\equiv -n (L_{i+2}-L_{i+1})^{-1} \hspace{.2cm} (\bmod \hspace{.2cm}L_{i+2}) \notag\\
&\equiv -n (-L_{i+1})^{-1} \hspace{.2cm} (\bmod \hspace{.2cm}L_{i+2}) \notag\\
&\equiv n L_{i+1}^{-1} \hspace{.2cm} (\bmod \hspace{.2cm}L_{i+2}) \notag\\
\end{align}
Using Casssini's identity, we have
$$L_{i+1}^2-L_iL_{i+2}\equiv (-1)^{i+1}5$$
$$L_{i+1}L_{i+1}\equiv (-1)^{i+1}5(\bmod L_{i+2})$$
$$L_{i+1} \left((-1)^{i+1}L_{i+1}\ 5^{-1}\right) \equiv 1(\bmod L_{i+2})$$
\begin{equation}
\label{II}
L_{i+1}^{-1}\equiv (-1)^{i+1}L_{i+1}\ 5^{-1} (\bmod L_{i+2})
\end{equation}
Using equations \eqref{I} and \eqref{II}, we have 
\begin{align*}
 a'_3 &\equiv (-1)^{i+1} n L_{i+1}\ 5^{-1}\hspace{.2cm} (\bmod \hspace{.2cm} L_{i+2})\\
   &\equiv A''_3 \hspace{.2cm} (\bmod \hspace{.2cm} L_{i+2}). 
\end{align*}
Next, we compute the quantities $c'_1$, $a'_2$ and $b'_3$. Note that 
\begin{align*}
c'_1 &\equiv bc^{-1} \hspace{.2cm} (\bmod \hspace{.2cm}a) \\
&\equiv L_{i+1}L_{i+2}^{-1} \hspace{.2cm}(\bmod \hspace{.2cm}L_i) \\
&\equiv (L_{i+2}-L_i) L_{i+2}^{-1} \hspace{.2cm} (\bmod \hspace{.2cm}L_i) \\
&\equiv L_{i+2} L_{i+2}^{-1} \hspace{.2cm} (\bmod \hspace{.2cm}L_i) \\
&\equiv 1 \hspace{.2cm} (\bmod \hspace{.2cm}L_i). 
\end{align*}

Thus, $c'_1 \equiv 1$ mod $L_i$, and $1 \leq c'_1 \leq L_i$. 
Similarly, 

\begin{align*}
a'_2 &\equiv ca^{-1} \hspace{.2cm} (\bmod \hspace{.2cm}b) \\
&\equiv L_{i+2}L_{i}^{-1} \hspace{.2cm} (\bmod \hspace{.2cm}L_{i+1}) \\
&\equiv (L_{i+1}+L_i) L_{i}^{-1} \hspace{.2cm} (\bmod \hspace{.2cm}L_{i+1}) \\
&\equiv L_{i} L_{i}^{-1} \hspace{.2cm} (\bmod \hspace{.2cm} L_{i+1}) \\
&\equiv 1 \hspace{.2cm} (\bmod \hspace{.2cm}L_{i+1}). 
\end{align*}

Thus, $a'_2 \equiv 1$ mod $L_{i+1}$, and $1 \leq a'_2 \leq L_{i+1}$. 

\begin{align*}
b'_3 &\equiv ab^{-1} \hspace{.2cm} (\bmod \hspace{.2cm}c) \\
&\equiv L_{i}L_{i+1}^{-1} \hspace{.2cm} (\bmod \hspace{.2cm}L_{i+2}) \\
&\equiv (L_{i+2}-L_{i+1}) L_{i+1}^{-1} \hspace{.2cm} (\bmod \hspace{.2cm}L_{i+2}) \\
&\equiv -L_{i+1} L_{i+1}^{-1} \hspace{.2cm} (\bmod \hspace{.2cm}L_{i+2}) \\
&\equiv -1 \hspace{.2cm} (\bmod \hspace{.2cm}L_{i+2}). 
\end{align*}
Thus, $b'_3 \equiv -1$ mod $L_{i+2}$, and $1 \leq b'_3 \leq L_{i+2}$. That is, $b'_3 = L_{i+2} - 1$. 
Putting the values of $b'_1, c'_2, a'_3$, and of $c'_1, a'_2, b'_3$ in the expression for $N_1$, we obtain $N_1 = N_3$. Finally, we solve the summations involved in Binner's formula. Using $b'_1 \leq L_i$, $c'_2 \leq L_{i+1}$, and $a'_3 \leq L_{i+2}$, we have

$$\sum_{i=1}^{b'_1-1} \left\lfloor \frac{ic'_1}{a} \right\rfloor=\sum_{i=1}^{b'_1-1} \left\lfloor \frac{ic'_1}{L_i} \right\rfloor =\sum_{i=1}^{b'_1-1} \left\lfloor \frac{i}{L_i} \right\rfloor=0,$$\\
$$\sum_{i=1}^{c'_2-1} \left\lfloor \frac{ia'_2}{b} \right\rfloor=\sum_{i=1}^{c'_2-1} \left\lfloor \frac{ia'_2}{L_{i+1}} \right\rfloor=\sum_{i=1}^{c'_2-1} \left\lfloor \frac{i}{L_{i+1}} \right\rfloor=0, $$

\begin{align*}
\sum_{j=1}^{a'_3-1} \left\lfloor \frac{jb'_3}{c} \right\rfloor &=\sum_{j=1}^{a'_3-1} \left\lfloor \frac{j(L_{i+2}-1)}{L_{i+2}} \right\rfloor\\
&=\sum_{j=1}^{a'_3-1} \left\lfloor j- \frac{j}{L_{i+2}} \right\rfloor\\     
&=\sum_{j=1}^{a'_3-1}(j-1)\\
&=\frac{(a'_3-2)(a'_3-1)}{2}.
\end{align*}
Putting the values of these summations in equation \eqref{Generall} completes the proof of our Theorem \ref{FibFrobb}. 

\begin{example}   
\normalfont Suppose $i=10$. Then, $L_{i} = 123$, $L_{i+1} = 199$, and $L_{i+2} = 322$. Consider the equation $123x+199y+322z=425896$.\\
By simple calculations we obtain\\
$$B''_1 = 65,\ C''_2 = 74,\  A''_3 = 168\ and\ N_3=-62942409684.$$\\
From \eqref{FibFrobb} The number of non-negative integer solutions of $123x+199y+322z=394072$  is given by:
 $$N(L_i,L_{i+1},L_{i+2};n)=\frac{N_3}{2L_iL_{i+1}L_{i+2}}-2+\frac{(A''_3-1)(A''_3-2)}{2}$$
 Hence,
 $$N(123,199,322;394072)=9532$$
\end{example}
\begin{center}
\textbf{Acknowledgements}
\end{center}
The author is greatful to Dr. Damanvir Singh Binner for his guidance and helpful discussions. In addition, the author is also thankful to the Maths Department at Sant Longowal Institute Of Engineering And Technology, Sangrur, Punjab for providing a good research environment. 

\end{document}